\newtheorem{Theorem}{Theorem}[section]
\newtheorem{Proposition}[Theorem]{Proposition}
\newtheorem{Lemma}[Theorem]{Lemma}
\newtheorem{Corollary}[Theorem]{Corollary}
\theoremstyle{definition}
\newtheorem{Definition}[Theorem]{Definition}
\newtheorem{Remark}[Theorem]{Remark}
\newtheorem{Notation}{Notation}
\begin{document}

\title[Magnetic Geodesics on the Space of K\"{a}hler Potentials]{Magnetic Geodesics on the Space of K\"{a}hler Potentials}

\author{S\.{i}bel \c{S}ah\.{i}n}

 \date{\today}

\address{Department of Mathematics, Mimar Sinan Fine Arts University, Istanbul, Turkey}

\email{sibel.sahin@msgsu.edu.tr}

\begin{abstract}
In this work, magnetic geodesics over the space of K\"{a}hler potentials are studied through a variational method for a generalized Landau-Hall functional. The magnetic geodesic equation is calculated in this setting and its relation to a perturbed complex Monge-Amp\`{e}re equation is given. Lastly, the magnetic geodesic equation is considered over the special case of toric K\"{a}hler potentials over toric K\"{a}hler manifolds.
\end{abstract}

\maketitle

\tableofcontents


\section*{Introduction}
\label{sec:intro}

Let $(X,\omega)$ be a compact K\"{a}hler manifold. X.X.Chen et al. examined the metric and geometric aspects of the space of all K\"{a}hler metrics $\mathcal{H}_\alpha$ and showed a remarkable result that $\mathcal{H}_\alpha$ is a path metric space \cite{C00,CC02,CT08,C09,CS09}. Weak geodesics (a special type of path between two points in $\mathcal{H}_\alpha$ ) on this space play an important role in the variational approach for solving complex Monge-Amp\`{e}re equations (CMAE) and for understanding the application of CMAE to find the K\"{a}hler-Einstein metrics on various varieties. As we will give in detail in the following parts of this study Semmes \cite{S92} showed that the geodesic equation can be reformulated as a homogenous CMAE of one degree higher.

Magnetic curves (or magnetic geodesics) are generalizations of geodesics. A such curve actually describes the trajectory of a particle moving under the effect of a magnetic field. As it is known geodesics are extremals of the energy functional and the geodesic equation can be calculated via Euler-Lagrange equations. In the case of magnetic geodesics one considers the extremals of Landau-Hall functional with the magnetic force (known as Lorentz force) and the magnetic trajectories ate the curves $\gamma$ that satisfy the Lorentz equation
$$
\nabla_{\gamma'}\gamma'=\phi(\gamma')
$$   
with the magnetic field $\phi$. 

In this study we will take this generalization of magnetic geodesics to the setting of K\"{a}hler potentials over compact K\"{a}hler manifolds and examine its relation to a perturbed CMAE. The organization of the paper is as follows:

In Section 1 we give the necessary background about K\"{a}hler potentials, geodesics over the space of K\"{a}hler potentials $\mathcal{H}_\alpha$ and basics about magnetic geodesics on Riemannian manifolds. In Section 2, we introduce magnetic geodesics over $\mathcal{H}_\alpha$ and we will give the main result of this study about the magnetic geodesic equation. As it is known, toric K\"{a}hler manifolds and toric potentials are very special cases however they are quite useful to test your hypothesis about the non toric environment hence in the last part of this paper we examine the toric magnetic geodesics over toric compact K\"{a}hler manifolds $(X,\omega,\mathbb{T})$. 

\section{Preliminaries}

Throughout this work we will work on the magnetic geodesics on the space of K\"{a}hler potentials so let us first introduce the setting and the classical geodesics of these potentials.

\begin{Definition}
Let $(X,\omega)$ be a compact K\"{a}hler manifold of dimension $n$. Any other K\"{a}hler metric on $X$ that is in the same cohomology class as $\omega$ is given by 
$$
\omega_{\varphi}=\omega+dd^c\varphi
$$
where $d=\partial+\overline{\partial}$ and $d^c=\dfrac{1}{2\pi i}(\partial-\overline{\partial})$. Then the space of K\"{a}hler potentials is defined as 
$$
\mathcal{H}=\{\varphi\in C^\infty(X):~~\omega_{\varphi}=\omega+dd^c\varphi>0\}.
$$
\end{Definition}

\begin{Notation}

\begin{enumerate}
\item[(1)] We know that two K\"{a}hler potentials generate the same metric if and only if they differ by a constant hence
$$
\mathcal{H}_\alpha=\mathcal{H}/\mathbb{R}
$$
is the space of K\"{a}hler metrics on $X$ in the cohomology class $\alpha=\{\omega\}\in H^{1,1}(X,\mathbb{R})$.

\item[(2)] Let $V_\alpha=\alpha^n=\int_X \omega^n$ be the volume of the space $X$. Then for any $\varphi\in\mathcal{H}$ we denote the Monge-Amp\`{e}re measure associated to $\varphi$ as 
$$
MA(\varphi)=\dfrac{\omega_{\varphi}^{n}}{V_\alpha}.
$$ 
\end{enumerate}

\end{Notation}

\subsection*{Part 1: Classical geodesics on the space of K\"{a}hler potentials}

\begin{Definition}
Geodesics between two points $\varphi_0,\varphi_1$ in $\mathcal{H}$ are defined as the extremals of the energy functional 
$$
\varphi\longrightarrow H(\varphi)=\dfrac{1}{2}\int_{0}^{1}\int_{X}(\dot\varphi_t)^2MA(\varphi_t)dt
$$
where $\varphi=\varphi_t$ is a path in $\mathcal{H}$ joining $\varphi_0$ and $\varphi_1$.
\end{Definition}

As it is done in most of the variational problems, the geodesic equation i.e the equation whose solution gives a geodesic path for certain boundary conditions is obtained by computing the Euler-Lagrange equation for this energy functional with fixed end points:

\begin{Lemma}[\cite{G14}, Lemma 1.2]
The geodesic equation is 
\begin{equation}\label{eq:geodesic equation}
\ddot{\varphi}=\Vert \nabla\dot{\varphi}\Vert_{\varphi}^{2}
\end{equation}
where the gradient is relative to the metric $\omega_\varphi$. This identity is also written as
$$
\ddot{\varphi}MA(\varphi)=\dfrac{n}{V_\alpha}d\dot{\varphi}\wedge d^c\dot{\varphi}\wedge \omega_{\varphi}^{n-1}.
$$
\end{Lemma}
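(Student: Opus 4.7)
The plan is to derive the identity as the Euler--Lagrange equation of $H$ by taking a smooth variation of the path with fixed endpoints and writing the first variation in two different ways: one exploiting integration by parts in the time variable $t$, the other exploiting Stokes' theorem on the closed K\"{a}hler manifold $X$.

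Concretely, first I would fix a path $\varphi_t\in\mathcal{H}$ and choose a smooth family $\varphi_t^s$ with $\varphi_t^0=\varphi_t$, $\partial_s\varphi_t^s\big|_{s=0}=\eta_t$, and $\eta_0\equiv\eta_1\equiv0$. Differentiating $H(\varphi^s)$ in $s$ at $s=0$ produces two contributions: the variation of $(\dot\varphi_t)^2$, which gives $\int_0^1\!\!\int_X \dot\varphi_t\,\dot\eta_t\, MA(\varphi_t)\,dt$, and the variation of the Monge--Amp\`ere measure, which by the computation $\partial_s(\omega+dd^c\varphi^s)^n\big|_{s=0}=n\,\omega_\varphi^{n-1}\wedge dd^c\eta$ equals $\tfrac{n}{2V_\alpha}\int_0^1\!\!\int_X(\dot\varphi_t)^2\,dd^c\eta_t\wedge\omega_{\varphi_t}^{n-1}\,dt$.

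Next I would process each term. For the first term, integrate by parts in $t$; because $MA(\varphi_t)$ itself depends on $t$ through $\omega_{\varphi_t}^n$ whose $t$-derivative is $n\,\omega_{\varphi_t}^{n-1}\wedge dd^c\dot\varphi_t$, the boundary contributions drop out (since $\eta$ vanishes at $t=0,1$) and one obtains
\[
-\int_0^1\!\!\int_X \eta_t\,\ddot\varphi_t\,MA(\varphi_t)\,dt\;-\;\frac{n}{V_\alpha}\int_0^1\!\!\int_X \eta_t\,\dot\varphi_t\,dd^c\dot\varphi_t\wedge\omega_{\varphi_t}^{n-1}\,dt.
\]
For the second term, use Stokes' theorem on the compact K\"ahler manifold $X$ together with $d\omega_\varphi^{n-1}=0$ to move the $dd^c$ off $\eta_t$; one computes $dd^c\bigl((\dot\varphi)^2\bigr)=2\,d\dot\varphi\wedge d^c\dot\varphi+2\dot\varphi\,dd^c\dot\varphi$, which yields
\[
\frac{n}{V_\alpha}\int_0^1\!\!\int_X \eta_t\bigl(d\dot\varphi_t\wedge d^c\dot\varphi_t+\dot\varphi_t\,dd^c\dot\varphi_t\bigr)\wedge\omega_{\varphi_t}^{n-1}\,dt.
\]

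Finally, I would observe the key cancellation: the two $\dot\varphi\,dd^c\dot\varphi\wedge\omega_\varphi^{n-1}$ contributions add to zero, leaving
\[
\partial_s H(\varphi^s)\big|_{s=0}=-\int_0^1\!\!\int_X\eta_t\Bigl(\ddot\varphi_t\,MA(\varphi_t)-\tfrac{n}{V_\alpha}d\dot\varphi_t\wedge d^c\dot\varphi_t\wedge\omega_{\varphi_t}^{n-1}\Bigr)dt.
\]
Since $\eta$ is arbitrary with vanishing endpoints, the bracketed expression must vanish, giving the second form of the geodesic equation; the first form follows from the pointwise identity $n\,du\wedge d^c u\wedge\omega_\varphi^{n-1}=\|\nabla u\|_\varphi^2\,\omega_\varphi^n$ applied to $u=\dot\varphi_t$. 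The main obstacle is bookkeeping: making sure both the $t$-integration by parts (where the $t$-dependence of $MA$ introduces an extra term) and the $X$-integration by parts (where $dd^c$ hits the square $(\dot\varphi)^2$) are carried out correctly, so that the unwanted $\dot\varphi\,dd^c\dot\varphi$ pieces cancel rather than accumulate.
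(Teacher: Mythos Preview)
Your proposal is correct and matches the paper's approach. The paper does not give a standalone proof of this lemma (it is cited from \cite{G14}), but exactly the same variational computation---variation with fixed endpoints, expansion of $MA(\varphi_t+s\psi_t)$, integration by parts in $t$ together with Stokes' theorem on $X$, and the cancellation of the $\dot\varphi\,dd^c\dot\varphi\wedge\omega_\varphi^{n-1}$ terms---appears as the energy part of the proof of Theorem~\ref{thm:mge}, with $\psi_t$ playing the role of your $\eta_t$.
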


\underline{\textit{Boundary Problem for Geodesic Equation:}}

Given $\varphi_0,\varphi_1$ two distinct potentials in $\mathcal{H}$ can one find a path $\varphi=(\varphi_t)_{0\leq t\leq 1}\in \mathcal{H}$ which is a solution of the geodesic equation (\ref{eq:geodesic equation}) with endpoints $\varphi(0)=\varphi_0$ and $\varphi(1)=\varphi_1$ ?

The answer to this problem is given by Semmes \cite{S92} and the solution is totally determined by the solvability of the complex Monge-Amp\`{e}re equation over a certain region. Before giving Semmes' solution let us introduce the specific setting of the problem:

For each path $(\varphi_t)_{0\leq t\leq 1}\in \mathcal{H}$ we set $\varphi(x,t,s)=\varphi_t(x)$, $x\in X$, $e^{t+is}\in A=[0,1]\times \partial\mathbb{D}$. Set $z=e^{t+is}$ and $\omega(x,z)\coloneqq\omega(x)$.

\begin{Proposition}[\cite{S92}] 
The path $\varphi_t$ is a geodesic in $\mathcal{H}$ if and only if the associated radial function $\varphi$ on $X\times A$ is a solution of the homogeneous complex Monge-Amp\`{e}re equation
$$
(\omega+dd^c_{x,z}\varphi)^{n+1}=0
$$
where the derivatives are taken in all variables $x,z$.
\end{Proposition}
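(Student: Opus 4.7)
The plan is to compute the $(n+1)$-fold wedge product $(\omega + dd^c_{x,z}\varphi)^{n+1}$ explicitly on $X \times A$, use that $\varphi$ depends on $z$ only through $t = \log|z|$, and match the answer against the geodesic equation from Lemma 1.4.

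First I would derive a decomposition of $dd^c_{x,z}\varphi$ adapted to the product structure. Since $\varphi(x,z) = \varphi_t(x)$ is independent of $s = \arg z$, a direct computation using $\partial_z \log|z| = 1/(2z)$ shows that on $X \times A$,
\[
d\varphi = d_x \varphi + \dot\varphi\, dt, \qquad d^c\varphi = d^c_x \varphi + \dot\varphi\, d^c t.
\]
Because $t = \log|z|$ is pluriharmonic on $\mathbb{C}^{\ast}$, one has $dd^c t = 0$, and differentiating a second time yields
\[
dd^c_{x,z}\varphi = dd^c_x \varphi + dt \wedge d^c_x \dot\varphi + d_x \dot\varphi \wedge d^c t + \ddot\varphi\, dt \wedge d^c t.
\]
Writing $\omega_\varphi = \omega + dd^c_x \varphi$ and $\Omega = \omega + dd^c_{x,z}\varphi$, this expresses $\Omega$ as a sum of four mutually commuting real $(1,1)$-forms.

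Next I would expand $\Omega^{n+1}$ via the multinomial theorem. Several nilpotencies drastically cut the number of surviving terms: the 2-forms $dt \wedge d^c t$, $dt \wedge d^c_x \dot\varphi$, and $d_x \dot\varphi \wedge d^c t$ each square to zero; any product containing two factors of $dt$ or two of $d^c t$ vanishes; and $\omega_\varphi^{n}$ kills any further 1-form purely on $X$, being already of top degree there. Only two contributions remain, the one from $\omega_\varphi^n \wedge \ddot\varphi\, dt \wedge d^c t$ and the cross term $\omega_\varphi^{n-1} \wedge (dt \wedge d^c_x \dot\varphi) \wedge (d_x \dot\varphi \wedge d^c t)$. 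After reordering the four 1-forms in the cross term, two transpositions of 1-forms produce a net sign and deliver
\[
\Omega^{n+1} = (n+1)\bigl[\,\ddot\varphi\, \omega_\varphi^{n} - n\, d\dot\varphi \wedge d^c \dot\varphi \wedge \omega_\varphi^{n-1}\,\bigr] \wedge dt \wedge d^c t.
\]

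Finally, since $dt \wedge d^c t$ is nowhere zero on the annulus $A$, the identity $\Omega^{n+1} = 0$ on $X \times A$ is equivalent to the vanishing of the bracketed expression on each slice $\{t\} \times X$, which is exactly the geodesic equation of Lemma 1.4 (the factor $V_\alpha$ in $MA(\varphi)$ being an irrelevant overall constant). Hence the radial $\varphi$ solves the homogeneous complex Monge-Amp\`ere equation if and only if $\varphi_t$ is a geodesic in $\mathcal{H}$. The main obstacle is bookkeeping: tracking signs and degrees in the cross term and confirming that no hidden contribution arises from the interaction of the $X$- and $A$-factors, which is resolved by noting that 1-forms from disjoint factors anticommute while 2-forms always commute.
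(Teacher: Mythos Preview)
Your argument is correct and is essentially the standard computation: the decomposition of $dd^c_{x,z}\varphi$ is right, the nilpotency bookkeeping is accurate, and the final identity
\[
(\omega+dd^c_{x,z}\varphi)^{n+1}=(n+1)\bigl[\ddot\varphi\,\omega_\varphi^{n}-n\,d\dot\varphi\wedge d^c\dot\varphi\wedge\omega_\varphi^{n-1}\bigr]\wedge dt\wedge d^c t
\]
is exactly what one obtains; vanishing of the bracket is the geodesic equation of Lemma~1.4. Note, however, that the paper does not supply its own proof of this proposition at all---it simply records the statement and attributes it to Semmes~\cite{S92}---so there is nothing in the paper to compare your approach against. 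Your write-up would in fact fill that gap.
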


\subsection*{Part 2: Magnetic Geodesics on Riemannian Manifolds }

As we have seen before, geodesics are obtained as the critical points of the energy functional. Finding the critical points of a specific perturbation of the energy functional however results in another type of curves namely magnetic geodesics:

Let us follow the definition of \cite{IM14},

\begin{Definition}
Let $(M,g)$ be a Riemannian manifold and $\omega$ be a 1-form (potential). For a smooth curve $\gamma:[a,b]\rightarrow M$ consider the functional
$$
LH(\gamma)\coloneqq\int_{a}^{b} \dfrac{1}{2}\left(\langle \gamma '(t),\gamma '(t)\rangle+\omega(\gamma '(t))\right)dt
$$ 
which is called the Landau-Hall functional for the curve $\gamma$.

The critical points of the LH-functional satisfy the Lorentz equation which is given as 
$$
\nabla_{\gamma '}\gamma'-\phi(\gamma')=0
$$ 
where $\phi$ is a $(1,1)$-tensor field on $M$ and determined by $g(\phi(X,Y))=d\omega(X,Y)$ for all $X,Y$ tangent to M.

\end{Definition}

\begin{Remark}
For a map $f:(M,g)\rightarrow (N,h)$, the Landau-Hall functional is given as 
$$
LH(f)\coloneqq E(f)+\int_N\omega(df(\xi))d\upsilon_h
$$
where $E(f)$ is the energy functional.

A map is called magnetic if it is a critical point of the Landau-Hall integral above.
\end{Remark}

\section{Magnetic geodesics on the space of K\"{a}hler potentials }

The notion of magnetic geodesics can also be generalized to K\"{a}hler potentials on a compact K\"{a}hler manifold and for the rest of the study we will focus on how this can be done. 

Let $(X,\omega)$ be a compact K\"{a}hler manifold.

\begin{Definition}
Magnetic geodesics between two points $\varphi_0,\varphi_1\in\mathcal{H}$ are defined to be the extremals of the generalized Landau-Hall functional
\begin{equation}\label{eq:glh}
\varphi\longrightarrow LH(\varphi)\coloneqq \dfrac{1}{2}\int_{0}^{1}\int_X(\dot{\varphi_t})^2MA(\varphi_t)dt+\int_{0}^{1}\beta_{\dot{\varphi_t}}(\dot{\varphi_t})dt
\end{equation}
where $\varphi=\varphi_t$ is a path connecting $\varphi_0$ and $\varphi_1$ in $\mathcal{H}$ and $\beta_{\dot{\varphi_t}}$ is the closed 1-form (potential) on $\mathcal{H}$ defined as $\beta_{\dot{\varphi_t}}(\dot{\varphi_t})=\int_X(\dot{\varphi_t})MA(\dot{\varphi_t})$.
\end{Definition}

\begin{Remark}
For the details of the closedness of the 1-form $\beta_\varphi$, see \cite{K12}, pp:245-246. 
\end{Remark}

Now we will give the main result of this study and calculate the magnetic geodesic equation for K\"{a}hler potentials:

\begin{Theorem}\label{thm:mge}
Magnetic geodesic equation on the space of K\"{a}hler potentials is given as
\begin{equation*}
\ddot{\varphi} MA(\varphi)=\dfrac{n}{V_\alpha} d\dot{\varphi}\wedge d^c\dot{\varphi} \wedge\omega_{\varphi}^{n-1}-\dfrac{2n}{V_\alpha} dd^c\ddot{\varphi} \wedge\omega_{\dot{\varphi}}^{n-1}-\dfrac{n(n-1)}{V_\alpha} dd^c \dot{\varphi} \wedge dd^c \ddot{\varphi}\wedge \omega_{\dot{\varphi}}^{n-2}.
\end{equation*}
\end{Theorem}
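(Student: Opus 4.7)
The plan is to derive the equation by directly computing the Euler--Lagrange equation of the Landau--Hall functional \eqref{eq:glh}. I would introduce a variation $\varphi^s_t = \varphi_t + s\,\eta_t$, where $\eta_t$ is a smooth family of functions on $X$ vanishing at the endpoints $t=0,1$ (so the boundary conditions $\varphi(0)=\varphi_0$, $\varphi(1)=\varphi_1$ are preserved), and then compute $\frac{d}{ds}\big|_{s=0} LH(\varphi^s)$ and set it equal to zero for all admissible $\eta$. The classical part $\frac{1}{2}\int_0^1\!\int_X(\dot\varphi_t)^2 MA(\varphi_t)\,dt$ is already handled by Lemma~1.4: standard integration by parts (first in $t$, picking up a $\ddot\varphi$ term plus a contribution from $\partial_t MA(\varphi) = n\, dd^c\dot\varphi\wedge\omega_\varphi^{n-1}/V_\alpha$, and then in $x$, turning $dd^c\eta$ into $dd^c\dot\varphi$ via closedness of $\omega_\varphi^{n-1}$) produces the first right--hand term $\frac{n}{V_\alpha}d\dot\varphi\wedge d^c\dot\varphi\wedge\omega_\varphi^{n-1}$ after the cancellation $dd^c((\dot\varphi)^2)=2\dot\varphi\,dd^c\dot\varphi + 2\,d\dot\varphi\wedge d^c\dot\varphi$.

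The heart of the argument is the variation of the magnetic term $\int_0^1\!\int_X \dot\varphi_t \cdot \omega_{\dot\varphi_t}^n/V_\alpha\,dt$. Differentiating in $s$ yields two pieces:
\[
\delta\bigl(LH_{\mathrm{mag}}\bigr)=\int_0^1\!\int_X \dot\eta\,\frac{\omega_{\dot\varphi}^n}{V_\alpha}\,dt+\int_0^1\!\int_X \dot\varphi\,\frac{n\,\omega_{\dot\varphi}^{n-1}\wedge dd^c\dot\eta}{V_\alpha}\,dt.
\]
For the first piece I would integrate by parts in $t$, using $\eta(0)=\eta(1)=0$ and $\partial_t\,\omega_{\dot\varphi}^n=n\,dd^c\ddot\varphi\wedge\omega_{\dot\varphi}^{n-1}$, which contributes $-\frac{n}{V_\alpha}dd^c\ddot\varphi\wedge\omega_{\dot\varphi}^{n-1}$. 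For the second piece, I would first integrate by parts in $x$ (legal because $\omega_{\dot\varphi}^{n-1}$ is closed and $X$ has no boundary) to move $dd^c$ from $\dot\eta$ onto $\dot\varphi$, and then integrate by parts in $t$. The $t$-derivative now hits both the factor $dd^c\dot\varphi$ (producing $dd^c\ddot\varphi$) and the factor $\omega_{\dot\varphi}^{n-1}$ (producing $(n-1)\,dd^c\ddot\varphi\wedge\omega_{\dot\varphi}^{n-2}$), so this term splits into a second copy of $-\frac{n}{V_\alpha}dd^c\ddot\varphi\wedge\omega_{\dot\varphi}^{n-1}$, which combines with the first piece to give the coefficient $-\frac{2n}{V_\alpha}$, plus the cross term $-\frac{n(n-1)}{V_\alpha}dd^c\dot\varphi\wedge dd^c\ddot\varphi\wedge\omega_{\dot\varphi}^{n-2}$.

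Adding the classical and magnetic variations and invoking the fundamental lemma of the calculus of variations (the identity holds as functions/measures on $X$ for a.e.\ $t$ since $\eta$ is arbitrary), after transferring the $\ddot\varphi\,MA(\varphi)$ term to the left-hand side one obtains the stated equation. The main bookkeeping obstacle is the second step: both factors inside the magnetic wedge product depend on $\dot\varphi$, so one must carefully apply the Leibniz rule for $\partial_t(\omega_{\dot\varphi}^{n-1}\wedge dd^c\dot\varphi)$ and keep track of the combinatorial factors $n$, $n-1$ and the two separate sources contributing to the coefficient $2n$; any slip here collapses the two independent terms on the right--hand side. A secondary technical point worth remarking is that the integration--by--parts steps require the closedness of $\omega_{\dot\varphi}^{n-1}$ (hence of $\beta_{\dot\varphi}$, per the remark following the definition) and compactness of $X$, both of which are available in the present setting.
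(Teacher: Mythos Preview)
Your proposal is correct and follows essentially the same approach as the paper: both introduce a fixed-endpoint variation, expand the Landau--Hall functional to first order, and reduce each $s$-linear term to the form $\int\eta\cdot(\ldots)$ via integration by parts in $x$ (using closedness of $\omega_{\dot\varphi}^{n-1}$ and compactness of $X$) and in $t$ (using $\eta_0=\eta_1=0$). Your identification of the two sources for the coefficient $-2n/V_\alpha$ and the Leibniz splitting that produces the $(n-1)$ cross term matches the paper's computation line for line.
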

 
\begin{proof}
We need to calculate the critical points of the generalized Landau-Hall functional therefore we will compute the Euler-Lagrange equation of this functional. Suppose that $\phi_{s,t}$ is a variation of $\varphi_t$ with fixed end points such that
$$
\phi_{0,t}=\varphi_t,~~\phi_{s,0}=\varphi_0,~~\phi_{s,1}=\varphi_1.
$$

Let $\psi_t=\dfrac{\partial \phi}{\partial s}\bigg\rvert_{s=0}$ then $\psi_0\equiv\psi_1\equiv 0$ ($\dagger$) and we have 

$$
\phi_{s,t}=\varphi_t+s\psi_t+o(s)~~~~\text{and}~~~~\dfrac{\partial\phi_{s,t}}{\partial t}=\dot{\varphi_t}+s\dot{\psi_t}+o(s).
$$   
Then,
\begin{equation*}
\begin{split}
LH(\phi_{s,t})= &\dfrac{1}{2}\int_{0}^{1}\int_X(\dot{\phi_{s,t}})^2MA(\phi_{s,t})dt+\int_{0}^{1}\int_X(\dot{\phi_{s,t}})MA(\dot{\phi_{s,t}})dt
\\
= &\dfrac{1}{2}\int_{0}^{1}\int_X(\dot{\varphi_t})^2MA(\varphi_t)+\dfrac{ns}{2V_\alpha}\int_{0}^{1}\int_X(\dot{\varphi_t})^2dd^c\psi_t\wedge\omega_{\varphi_t}^{n-1}dt
\\
 + & s\int_{0}^{1}\int_X\dot{\varphi_t}\dot{\psi_t}MA(\varphi_t)dt+\int_{0}^{1}\int_X(\dot{\varphi_t})MA(\dot{\varphi_t})dt
\\
+ & \dfrac{ns}{V_\alpha}\int_{0}^{1}\int_X(\dot{\varphi_t})dd^c\dot{\psi_t}\wedge\omega_{\dot{\varphi_t}}^{n-1}dt+s\int_{0}^{1}\int_X\dot{\psi_t}MA(\dot{\varphi_t})dt+o(s).
\end{split}
\end{equation*}

Now using the boundary values ($\dagger$) let us calculate the s-dependent terms,
\begin{itemize}
\item $\displaystyle{\int_{0}^{1}\int_X(\dot{\varphi_t})^2dd^c\psi_t\wedge\omega_{\varphi_t}^{n-1}dt=2\int_{0}^{1}\int_X\psi_t\{d\dot{\varphi_t}\wedge d^c\dot{\varphi_t}+\dot{\varphi_t}\wedge dd^c\dot{\varphi_t}\}\wedge \omega_{\varphi_t}^{n-1}dt}$

\vspace{0.2cm}

\item $\displaystyle{\int_{0}^{1}\int_X\dot{\varphi_t}\dot{\psi_t}MA(\varphi_t)dt=-\int_{0}^{1}\int_X}\psi_t\{\ddot{\varphi_t}MA(\varphi_t)+\dfrac{n}{V_\alpha}\dot{\varphi_t}dd^c \dot{\varphi_t}\wedge \omega_{\varphi_t}^{n-1}\}dt$

\vspace{0.2cm}

\item $\displaystyle{\int_{0}^{1}\int_X(\dot{\varphi_t})dd^c\dot{\psi_t}\wedge\omega_{\dot{\varphi_t}}^{n-1}dt=\int_{0}^{1}\int_X\dot{\psi_t}dd^c\dot{\varphi_t}\wedge\omega_{\dot{\varphi_t}}^{n-1}}$\\
$\displaystyle{-\int_{0}^{1}\int_X\psi_t\{dd^c\ddot{\varphi_t}\wedge\omega_{\dot{\varphi_t}}^{n-1}+(n-1)dd^c\dot{\varphi_t}\wedge dd^c\ddot{\varphi_t}\wedge\omega_{\dot{\varphi_t}}^{n-2}\}dt}$

\vspace{0.2cm}

\item $\displaystyle{\int_{0}^{1}\int_X\dot{\psi_t}MA(\dot{\varphi_t})dt=-\int_{0}^{1}\int_X\psi_t\left(\dfrac{n}{V_\alpha}dd^c \ddot{\varphi_t}\wedge\omega_{\dot{\varphi_t}}^{n-1}\right)dt}$
\end{itemize}

If we combine all these equations, we obtain
\begin{equation*}
LH(\phi_{s,t})=LH(\varphi_t)+
\end{equation*}

\begin{equation*}
    \begin{split}
    s\int_{0}^{1}\int_X \psi_t \left\{ -\ddot{\varphi_t}MA(\varphi_t)+\dfrac{n}{V_\alpha}d\dot{\varphi_t}\wedge d^c\dot{\varphi_t}\wedge \omega_{\varphi_t}^{n-1}-\dfrac{2n}{V_\alpha} dd^c\ddot{\varphi_t}\wedge \omega_{\dot{\varphi_t}}^{n-1}  \right. \\
    \left. -\dfrac{n(n-1)}{V_\alpha} dd^c\dot{\varphi_t}\wedge dd^c\ddot{\varphi_t}\wedge \omega_{\dot{\varphi_t}}^{n-2} \vphantom{\int_1^2} \right\} dt+o(s). 
    \end{split}
\end{equation*}

Hence, if $\varphi$ is a critical point of the generalized Landau-Hall functional then it satisfies the following equation

\begin{equation*}
\ddot{\varphi} MA(\varphi)=\dfrac{n}{V_\alpha}\left[d\dot{\varphi}\wedge d^c\dot{\varphi} \wedge\omega_{\varphi}^{n-1}- 2dd^c\ddot{\varphi} \wedge\omega_{\dot{\varphi}}^{n-1}-(n-1) dd^c \dot{\varphi} \wedge dd^c \ddot{\varphi}\wedge \omega_{\dot{\varphi}}^{n-2}\right].
\end{equation*}

\end{proof}

\section{A Special Case: Toric Magnetic Geodesics }

In this section we will consider the magnetic geodesic equation on a special setting where $(X,\omega,\mathbb{T})$ is a toric, compact, K\"{a}hler manifold and our potentials are toric K\"{a}hler potentials. Before passing to magnetic geodesic equation let us first give the details about toric manifolds and the structure of toric potentials:

\begin{Definition}
A toric, compact, K\"{a}hler manifold is obtained by an equivariant compactification of the torus $\mathbb{T}=(\mathbb{C}^*)^n$ with an $(S^1)^n$-invariant K\"{a}hler metric $\omega$ and given by the triple $(X,\omega,\mathbb{T})$. 

In this setting the K\"{a}hler metric $\omega$ can be written in the form 
$$
\omega=dd^c F_0\circ L
$$
over $\mathbb{T}$ where $F_0:\mathbb{R}^n\rightarrow\mathbb{R}$ is a smooth, strictly convex function and $L:\mathbb{T}\rightarrow\mathbb{R}^n$ is the logarithmic transformation function
$$
L(z_1,\dots, z_n)=(\log\vert z_1\vert,\dots,\log\vert z_n\vert).
$$

\end{Definition} 

As it is very well known, in the compact setting as a result of the Maximum Modulus Principle there is no non-constant plurisubharmonic function however we have a general class of functions analogous to plurisubharmonic functions namely quasiplurisubharmonic functions in this setting:

\begin{Definition}
A function $\varphi:X\rightarrow R\cup\{-\infty\}$ is called $\omega$-plurisubharmonic if
\begin{itemize}
\item[(i)] it is locally the sum of a plurisubharmonic function and a smooth function,
\item[(ii)] the current $\omega+dd^c \varphi$ is positive on $X$.
\end{itemize}

An $\omega$-plurisubharmonic function is called \textit{toric} if it is invariant under the $(S^1)^n$-action induced by the $(\mathbb{C}^*)^n$ action on $X$. Toric $\omega$-plurisubharmonic functions on $X$ are denoted as $PSH_{tor}(X,\omega)$.
\end{Definition} 

From the definition we obtain a representation of $PSH_{tor}(X,\omega)$ functions over $\mathbb{T}$ such that there exists a convex function $F_\varphi:\mathbb{R}^n\rightarrow \mathbb{R}$ such that 
\begin{equation}\label{eq:torrep}
F_\varphi\circ L=F_0\circ L+\varphi~~\text{on}~~\mathbb{T}\subset X
\end{equation}

The representation (\ref{eq:torrep}) gives a relation between toric $\omega$-plurisubharmonic functions and real convex functions, now we will continue with a result which takes this relation one step further i.e the connection between complex and real Monge-Amp\`{e}re measures [\cite{CGSZ19}, Lemma 2.3]:

\begin{Proposition}
Let $F:\mathbb{R}^n\rightarrow\mathbb{R}$ be a convex function. If $\chi$ is a continuous function with compact support on $\mathbb{R}^n$ then
$$
\int_{\mathbb{T}}(\chi\circ L)(dd^cF\circ L)^n=\int_{\mathbb{R}^n} \chi MA_{\mathbb{R}}(F)
$$
where $MA_{\mathbb{R}}(F)$ is the real Monge-Amp\`{e}re measure of $F$ defined as
$$
MA_{\mathbb{R}}(F)=n!\det\left[\dfrac{\partial^2F}{\partial x_i\partial x_j}\right]dV.
$$ 
\end{Proposition}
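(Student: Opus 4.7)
The plan is to reduce the identity to a pointwise coordinate computation on the open dense torus $\mathbb{T}\subset X$, using $(x_1,\theta_1,\dots,x_n,\theta_n)$ coordinates coming from $z_j=e^{x_j+i\theta_j}$, and then use Fubini. First I would dispose of regularity: if the identity holds for every smooth, strictly convex $F$, then by convolving an arbitrary convex $F$ with a standard radial mollifier one gets smooth strictly convex $F_\varepsilon\to F$ locally uniformly; both the real Monge--Amp\`ere measure $MA_{\mathbb{R}}(F_\varepsilon)$ and the complex measure $(dd^c F_\varepsilon\circ L)^n$ converge weakly to their counterparts for $F$ (the complex side is the Bedford--Taylor convergence, the real side is the Aleksandrov continuity), so the compactly supported test function $\chi$ lets one pass to the limit on both sides.

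Assume then that $F$ is smooth. Under $z_j=e^{x_j+i\theta_j}$ we have $L(z)=x$, so $F\circ L$ depends only on $x$. A direct computation using $d^c=\frac{1}{2\pi i}(\partial-\bar\partial)$ and $\partial_{z_j}\log|z_k|=\frac{\delta_{jk}}{2z_j}$ gives
\begin{equation*}
dd^c(F\circ L)=\frac{i}{4\pi}\sum_{j,k}F_{jk}(x)\,\frac{dz_j}{z_j}\wedge\frac{d\bar z_k}{\bar z_k},\qquad F_{jk}=\frac{\partial^2 F}{\partial x_j\partial x_k}.
\end{equation*}
Taking the $n$-th wedge power, the multilinear expansion collapses to
\begin{equation*}
\bigl(dd^c(F\circ L)\bigr)^n=n!\left(\frac{i}{4\pi}\right)^n\det[F_{jk}(x)]\,\bigwedge_{j=1}^n\frac{dz_j}{z_j}\wedge\frac{d\bar z_j}{\bar z_j}.
\end{equation*}
Since $\frac{dz_j}{z_j}=dx_j+i\,d\theta_j$ and $\frac{d\bar z_j}{\bar z_j}=dx_j-i\,d\theta_j$, each factor equals $-2i\,dx_j\wedge d\theta_j$, and the constant simplifies as $(i/4\pi)^n(-2i)^n=1/(2\pi)^n$. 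Thus
\begin{equation*}
\bigl(dd^c(F\circ L)\bigr)^n=\frac{n!}{(2\pi)^n}\det[F_{jk}(x)]\,dx_1\wedge d\theta_1\wedge\cdots\wedge dx_n\wedge d\theta_n.
\end{equation*}

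Finally, because $(\chi\circ L)(z)=\chi(x)$ is independent of the angles and has compact support in the $x$-direction, Fubini on $\mathbb{T}\cong\mathbb{R}^n\times (S^1)^n$ gives
\begin{equation*}
\int_{\mathbb{T}}(\chi\circ L)\,(dd^c F\circ L)^n=\frac{n!}{(2\pi)^n}\int_{(S^1)^n}d\theta\int_{\mathbb{R}^n}\chi(x)\det[F_{jk}(x)]\,dx=n!\int_{\mathbb{R}^n}\chi\det[F_{jk}]\,dV,
\end{equation*}
which is exactly $\int_{\mathbb{R}^n}\chi\,MA_{\mathbb{R}}(F)$. The smooth step is a bookkeeping exercise once the constants in $d^c$ and the orientation $dx_j\wedge d\theta_j$ are fixed; the delicate part is the first step: justifying the passage to the limit for a merely convex $F$, where one needs that the mollified $F_\varepsilon\circ L$ are PSH (ensured by convexity plus the logarithmic pullback), that $F_\varepsilon\to F$ locally uniformly, and that Bedford--Taylor continuity applies simultaneously with continuity of the real Monge--Amp\`ere operator. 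Once this is handled, the coordinate computation above closes the proposition.
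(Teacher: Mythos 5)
Your argument is correct, but note that the paper does not actually prove this Proposition: it is imported verbatim from \cite{CGSZ19} (Lemma 2.3), so what you have written is a self-contained proof of a quoted result rather than an alternative to anything in the text. Your route is the standard one and, as far as I can tell, the same one used in the cited source: check the identity for smooth convex $F$ by computing $dd^c(F\circ L)$ in logarithmic coordinates and integrating out the angular variables, then pass to general convex $F$ by mollification. The constants check out with the paper's normalization $d^c=\frac{1}{2\pi i}(\partial-\bar\partial)$, which gives $dd^c=\frac{i}{\pi}\partial\bar\partial$ and hence the factor $\frac{i}{4\pi}$ you wrote; the angular integral then exactly cancels the $(2\pi)^{-n}$, so the absence of any normalizing constant in the stated identity is consistent.

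Two small points, neither fatal. First, the mollification $F_\varepsilon=F*\rho_\varepsilon$ of a convex function is smooth and convex but need not be \emph{strictly} convex as you assert; this costs you nothing since the smooth computation never uses strict convexity (or you can add $\varepsilon|x|^2$). Second, for a radial mollifier one in fact gets $F_\varepsilon\downarrow F$ pointwise (hence locally uniformly, $F$ being continuous), so the complex side falls under the plain decreasing-sequence case of Bedford--Taylor continuity and you do not need to invoke the locally-uniform-convergence version; on the real side, weak continuity of $MA_{\mathbb{R}}$ under locally uniform convergence of convex functions is the standard Aleksandrov statement, as you say. You should also record explicitly that $L^{-1}(\mathrm{supp}\,\chi)$ is compact in $\mathbb{T}$, so the integral over $\mathbb{T}$ is unambiguous and no boundary contribution from $X\setminus\mathbb{T}$ enters.
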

\begin{Remark}
For a detailed study of the concepts related to toric pluripotential theory, see \cite{CGSZ19}. 
\end{Remark}

Now let us introduce the space of toric-K\"{a}hler potentials:

\begin{Definition}
The space of toric-K\"{a}hler potentials are defined as
$$
\mathcal{H}_{tor}=\mathcal{H}\cap PSH(X,\omega)
$$
where a toric-K\"{a}hler potential is represented by a strictly convex function.
\end{Definition}

Lastly, let us give the magnetic geodesic equation for toric-K\"{a}hler potentials in totally real terms:

\begin{Corollary}
In the space of toric-K\"{a}hler potentials $\mathcal{H}_{tor}(X,\omega)$ the magnetic geodesic equation can be written in the following form:
\begin{equation}
\begin{split}
\ddot{F_\varphi}MA_{\mathbb{R}}(F_\varphi) & =\dfrac{n}{V}\left(\sum_{i,j}\left(\dfrac{\partial\dot{F_\varphi}}{\partial x_i}\right)\left(\dfrac{\partial\dot{F_\varphi}}{\partial x_j}\right)\right)\left(\sum_{i,j}\dfrac{\partial^2 F_\varphi}{\partial x_i\partial x_j}\right)^{n-1}\\
&-\dfrac{2n}{V}\left(\sum_{i,j}\dfrac{\partial^2 \ddot{F_\varphi}}{\partial x_i\partial x_j}\right)\left(\sum_{i,j}\dfrac{\partial^2 \dot{F_\varphi}}{\partial x_i\partial x_j}\right)^{n-1}\\
& -\dfrac{n(n-1)}{V}\left(\sum_{i,j}\dfrac{\partial^2 \dot{F_\varphi}}{\partial x_i\partial x_j}\right)\left(\sum_{i,j}\dfrac{\partial^2 \ddot{F_\varphi}}{\partial x_i\partial x_j}\right)\left(\sum_{i,j}\dfrac{\partial^2\dot{F_\varphi}}{\partial x_i\partial x_j}\right)^{n-2}
\end{split}
\end{equation}

where $F_\varphi$ is the corresponding path of strictly convex functions for the path $\varphi$ connecting $\varphi_0,\varphi_1\in \mathcal{H}_{tor}$ and $\displaystyle{V=\int_{\mathbb{R}^n}MA_{\mathbb{R}}(F_0)}$.

\end{Corollary}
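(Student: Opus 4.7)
My plan is to restrict the magnetic geodesic equation of Theorem~\ref{thm:mge} from $X$ to the open dense torus orbit $\mathbb{T}\subset X$ and then push everything down to $\mathbb{R}^n$ via the logarithmic map $L$, using the toric representation (\ref{eq:torrep}) together with the complex--real Monge--Amp\`{e}re dictionary of the preceding Proposition.

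First I would exploit that $F_0\circ L$ is time-independent, so differentiating (\ref{eq:torrep}) in $t$ yields $\dot{\varphi}=\dot{F_\varphi}\circ L$ and $\ddot{\varphi}=\ddot{F_\varphi}\circ L$ on $\mathbb{T}$, while $\omega_{\varphi}|_{\mathbb{T}}=dd^{c}(F_\varphi\circ L)$, $dd^{c}\dot{\varphi}|_{\mathbb{T}}=dd^{c}(\dot{F_\varphi}\circ L)$ and analogously for $\ddot{\varphi}$. Hence every $(n,n)$-form appearing on the right-hand side of Theorem~\ref{thm:mge} becomes, over $\mathbb{T}$, a wedge product of forms of the type $dd^{c}(H\circ L)$ with $H:\mathbb{R}^{n}\to\mathbb{R}$ convex, plus the single gradient piece $d\dot{\varphi}\wedge d^{c}\dot{\varphi}$.

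Next I would convert each of these wedge products into a real expression through the Proposition. The pure top-degree term $(dd^{c}F_\varphi\circ L)^{n}$ pushes to $MA_{\mathbb{R}}(F_\varphi)$, and applying the Proposition to $F_0$ identifies the normalization constants $V_\alpha=V$. Mixed wedges of the form $dd^{c}(G\circ L)\wedge (dd^{c}(H\circ L))^{n-1}$ I would handle by polarizing the Proposition: they correspond to the mixed real Monge--Amp\`{e}re expression, i.e.\ the directional derivative of $\det\mathrm{Hess}$ in the direction $\mathrm{Hess}\,G$, which is exactly the schematic $(\sum_{i,j}\partial_{ij}G)(\sum_{i,j}\partial_{ij}H)^{n-1}$ type expression on the right-hand side of the Corollary. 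The same polarization performed twice disposes of the triple wedge $dd^{c}\dot{\varphi}\wedge dd^{c}\ddot{\varphi}\wedge\omega_{\dot{\varphi}}^{n-2}$. The gradient term $d\dot{\varphi}\wedge d^{c}\dot{\varphi}\wedge\omega_{\varphi}^{n-1}$ is transformed analogously, using that on $\mathbb{T}$ the $(1,0)$-component of $d\dot{\varphi}$ equals $\sum_{i}(\partial_{i}\dot{F_\varphi})\,\frac{dz_{i}}{z_{i}}$ (up to the standard normalization), so $d\dot{\varphi}\wedge d^{c}\dot{\varphi}$ is a quadratic form in $\nabla\dot{F_\varphi}$ which wedged with the Hessian volume form of $F_\varphi$ produces the first summand on the right of the Corollary. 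Since both sides of the magnetic geodesic equation are $(S^{1})^{n}$-invariant $(n,n)$-forms on $X$ and $\mathbb{T}$ is dense in $X$, equality on $\mathbb{T}$ extends by continuity to all of $X$.

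The principal obstacle I expect is combinatorial bookkeeping rather than any substantive new ingredient: one must express each mixed wedge as a mixed discriminant of real Hessians with the correct symmetrization factor, so that the constants $n$ and $n(n-1)$ appearing in Theorem~\ref{thm:mge} match the ones produced by polarizing $\det\mathrm{Hess}$ once and twice, respectively. Once this polarization identity is recorded, the Corollary follows by direct substitution into the equation of Theorem~\ref{thm:mge}; the compact $\sum_{i,j}$ notation in the statement is to be read as shorthand for these mixed-discriminant expressions.
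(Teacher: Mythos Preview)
Your proposal is correct and follows precisely the approach the paper takes: the paper's own proof is the single sentence ``Representation of toric functions together with the previous proposition and the main theorem (Theorem~\ref{thm:mge}) give the result,'' i.e.\ restrict Theorem~\ref{thm:mge} to $\mathbb{T}$, use (\ref{eq:torrep}), and invoke the complex--real Monge--Amp\`{e}re dictionary. Your outline is considerably more detailed than the paper's---in particular your remarks on polarization and on reading the $\sum_{i,j}$ expressions as shorthand for mixed discriminants supply justification that the paper leaves entirely to the reader.
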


\begin{proof}
Representation of toric functions together with the previous proposition and the main theorem, (Theorem \ref{thm:mge}) give the result.
\end{proof}

\newpage

\end{document}